\newtheorem{Theorem}{Theorem}[section]
\newtheorem{Lemma}[Theorem]{Lemma}
\newtheorem{Definition}{Definition}[section]
\numberwithin{equation}{section}
\numberwithin{figure}{section}
\begin{document}

\title[]{Existence of solutions to a generalized self-dual Chern-Simons equation on finite graphs}

\author[Y. Hu]{Yuanyang Hu$^1$}
\thanks{$^1$ School of Mathematics and Statistics,
	Henan University, Kaifeng, Henan 475004, P. R. China.}

\thanks{{\bf Emails:} {\sf yuanyhu@mail.ustc.edu.cn} (Y. Hu).}
\date{\today}

\keywords{Chern-Simons equation, finite graph, existence, uniqueness, the variational method}

\begin{abstract}
	Let $G=(V,E)$ be a connected finite graph.
We study the existence of solutions for the following generalized Chern-Simons equation on $G$
\begin{equation*}
	\Delta u=\lambda \mathrm{e}^{u}\left(\mathrm{e}^{u}-1\right)^{5}+4 \pi \sum_{s=1}^{N} \delta_{p_{s}} \quad ,
\end{equation*}
where $\lambda>0$, $\delta_{p_{s}}$ is the Dirac mass at the vetex $p_s$, and $p_1, p_2,\dots, p_N$ are arbitrarily chosen distinct vertices on the graph. We show that there exists a critial value $\hat{\lambda}$ such that when $\lambda > \hat{\lambda}$, the
generalized Chern-Simons equation has at least two solutions, when $\lambda = \hat{\lambda}$, the
generalized Chern-Simons equation has a solution, and when $\lambda < \hat\lambda$, the
generalized Chern-Simons equation has no solution.
\end{abstract}
%\vspace*{0.2cm}

\keywords{variational method, mountain-pass theorem, Chern-Simons equation, finite graph, equation on graphs}

\maketitle
%\vspace*{0.5 cm}

\section{Introduction}
 Abrikosov \cite{A} considered Magnetic vortex configurations in the context of Ginzburg–Landau theory of superconductivity. Later, Nielsen and Olesen stressed the relevance to high-energy physics of vortex-line solutions of the Abelian Higgs model in the context of dual string models \cite{NO}. Since then the interest on vortices has continued to grow both in condensed-matter and particle physics. A self-dual Chern-Simons model now plays an important role in various areas of physics, many researchers did a lot of significant work on the existence of vortices in this Chern-Simons model \cite{Bdd, ci, H, TY, Y}. Caffarelli and Yang \cite{CY} established the existence of condensates or periodic multivortices in the Abelian Chern-Simons-Higgs model. Bazeia, da Hora, dos Santos \cite{Bdd}, proposed a generalized Chern-Simons model, and obtained a generalized self-dual Chern-Simons equation. Later, Han \cite{H} established the existence of doubly periodic multi-vortices solutions to the generalized self-dual Chern-Simons model.
 
In recent years, increasing efforts have been devoted to the development of analysis on graph; see, for example, \cite{ ALY, Bdd, HLY, Hub, LP}  and the references therein. Bendito et al. \cite{BCE} considered the discrete analogues of boundary value problems for Laplacian operator on Riemannian manifolds. Lin and Wu \cite{LW} established the existence and nonexistence of global solutions for the following semilinear heat equation on G
\begin{equation*}
	\left\{\begin{array}{lc}
		u_{t}=\Delta u+u^{1+\alpha} & \text { in }(0,+\infty) \times V, \\
		u(0, x)=a(x) & \text { in } V,
	\end{array}\right.
\end{equation*}
where $G=(V,E)$ is a finite or locally finite connected weighted graph.  Bauer et al. \cite{BY} established the Li-Yau gradient estimate for the heat kernel on graphs. Horn et al. \cite{HLLY} proved Li-Yau-type estimates for bounded and positive solutions of the heat equation on graphs.  Grigor'yan, Lin and Yang \cite{ALY} studied Kazdan-Warner equation on a finite graph. Ge, Hua and Jiang \cite{GHJ} studied the Liouville equation $$-\Delta u=e^{u}$$ on a graph $G$ satisfying a certain isoperimetric inequality. In \cite{GJ}, Ge and Jiang studied the $1$-Yamabe equation
\begin{equation}
	\Delta_{1} u+g \operatorname{Sgn}(u)=h|u|^{\alpha-1} \operatorname{Sgn}(u),
\end{equation}
 on connected finite graph, where $\Delta_{1}$ is the discrete $1$-Laplacian, $\alpha>1$ and $g, h>0$ are known. In \cite{HLY},  Huang, Lin and Yau studied the mean field equation 
\begin{equation}\label{y}
	\Delta u+e^{u}=\rho \delta_{0}
\end{equation}  
 on $V$, where $G=(V,E)$ is a connected finite graph, constant $\rho >0$ and $\delta_{0}$ is the Dirac mass at the zero point, and proved an existence result to \eqref{y}, which is different from that on the cotinuous limit-the two dimensional flat tori, when $\rho= 8\pi$. 
 
 Let $G=(V,E)$ be a connected finite graph, where $V$ denotes the vetex set and $E$ denotes the edge set. In this paper, we estabilish the existence of solutions to following generalized self-dual Chern-Simons equation
\begin{equation}\label{E1}
	\Delta u=\lambda \mathrm{e}^{u}\left(\mathrm{e}^{u}-1\right)^{5}+4 \pi \sum_{s=1}^{N} \delta_{p_{s}} \quad \text { on } \quad V,
\end{equation}
where $\lambda>0$, $\delta_{p_{s}}$ is the Dirac mass at the vetex $p_s$, and $p_1, p_2,\dots, p_N$ are arbitrarily chosen distinct vertices on the graph.

We are now ready to delineate the major result of this paper.

\begin{Theorem}\label{t1}
	There exists a critical value $$\hat{\lambda} \ge \frac{6^6}{5^5}\frac{4\pi N}{\text{Vol}(V)} $$ such that if $\lambda > \hat{\lambda}$, then \eqref{E1} has at least two solutions, if $\lambda = \hat{\lambda}$, then \eqref{E1} has a solution, and if $\lambda < \hat{\lambda}$, then \eqref{E1} admits no solution.
\end{Theorem}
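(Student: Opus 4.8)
The plan is to combine the method of sub- and super-solutions with a mountain-pass argument, using throughout that on a finite graph the Sobolev space $W^{1,2}(V)$ coincides with $\R^{\,|V|}$ ($|V|$ being the number of vertices), so that compactness always reduces to boundedness. I begin with two reductions. First, by the discrete maximum principle every solution $u$ of \eqref{E1} satisfies $u<0$ on $V$: at a vertex realizing $\max_V u$ one has $\Delta u\le 0$, hence $\mathrm{e}^{u}(\mathrm{e}^{u}-1)^{5}\le 0$ there (so $u\le 0$ there), while the value $0$ is excluded because it would force $u\equiv 0$ by connectedness, contradicting the Dirac masses. Second, integrating \eqref{E1} over $V$ gives $\lambda\int_V \mathrm{e}^{u}(\mathrm{e}^{u}-1)^{5}\,d\mu=-4\pi N$; since $g(t):=\mathrm{e}^{t}(\mathrm{e}^{t}-1)^{5}$ obeys $g(t)\ge -5^{5}/6^{6}$ for all $t\in\R$ (with $x=\mathrm{e}^{t}$ the function $x(x-1)^{5}$ is minimized at $x=1/6$), we get $4\pi N/\lambda\le\tfrac{5^{5}}{6^{6}}\,\mathrm{Vol}(V)$, which already proves that \eqref{E1} has no solution when $\lambda<\tfrac{6^{6}}{5^{5}}\tfrac{4\pi N}{\mathrm{Vol}(V)}$.

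Next I would show that $\Lambda:=\{\lambda>0:\eqref{E1}\text{ is solvable}\}$ is a half-line. The constant $\bar u\equiv 0$ is always a super-solution ($\Delta 0=0\le 4\pi\sum_s\delta_{p_s}$). If $\lambda_0\in\Lambda$ with solution $u_{\lambda_0}<0$, then for $\lambda>\lambda_0$ one has $\Delta u_{\lambda_0}=\lambda g(u_{\lambda_0})+4\pi\sum_s\delta_{p_s}+(\lambda_0-\lambda)g(u_{\lambda_0})$ with $(\lambda_0-\lambda)g(u_{\lambda_0})>0$, so $u_{\lambda_0}<0=\bar u$ is a (strict) sub-solution of the $\lambda$-equation and the monotone iteration gives a solution; hence $[\lambda_0,\infty)\subset\Lambda$. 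Moreover $\Lambda\ne\emptyset$: with $u_0$ the (mean-normalized) solution of $\Delta u_0=4\pi\sum_s\delta_{p_s}-\tfrac{4\pi N}{\mathrm{Vol}(V)}$, the function $u_0+c$ is a sub-solution lying below $0$ as soon as $\mathrm{e}^{u_0(x)+c}\bigl(1-\mathrm{e}^{u_0(x)+c}\bigr)^{5}\ge \tfrac{4\pi N}{\lambda\,\mathrm{Vol}(V)}$ for every $x\in V$, and this can be arranged by a suitable choice of $c$ once $\lambda$ is large enough that the set $\{s\in(0,1):s(1-s)^{5}\ge \tfrac{4\pi N}{\lambda\,\mathrm{Vol}(V)}\}$ is, on a logarithmic scale, longer than the oscillation of $u_0$. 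Thus $\Lambda=[\hat\lambda,\infty)$ or $(\hat\lambda,\infty)$ with $\hat\lambda:=\inf\Lambda\ge\tfrac{6^{6}}{5^{5}}\tfrac{4\pi N}{\mathrm{Vol}(V)}$.

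For $\lambda>\hat\lambda$, pick $\lambda'\in(\hat\lambda,\lambda)\cap\Lambda$ and consider $J_\lambda(u)=\tfrac12\int_V|\nabla u|^{2}\,d\mu+\tfrac{\lambda}{6}\int_V(\mathrm{e}^{u}-1)^{6}\,d\mu+4\pi\sum_{s=1}^{N}u(p_s)$, whose critical points are precisely the solutions of \eqref{E1}. Minimizing $J_\lambda$ over the compact convex set $\mathcal K=\{u: u_{\lambda'}\le u\le 0\}$ gives a minimizer $u_1$, and I would check that $u_1$ is interior to $\mathcal K$: the Kuhn--Tucker inequalities at an active lower obstacle, combined with $u_{\lambda'}$ being a strict sub-solution, yield a contradiction, and an active upper obstacle is ruled out by the maximum principle together with the Dirac masses. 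Hence $u_1$ is a solution of \eqref{E1} and a local minimum of $J_\lambda$. To produce a second solution I replace $g$ by its truncation $\tilde g(t):=g(\min(t,0))$ (so $\tilde g\equiv 0$ on $[0,\infty)$) and work with the corresponding energy $\tilde J_\lambda$; every critical point of $\tilde J_\lambda$ still satisfies $u\le 0$ by the maximum principle, hence solves \eqref{E1}, the point $u_1$ is still a local minimum of $\tilde J_\lambda$, and $\tilde J_\lambda(c)\to-\infty$ as $c\to-\infty$ along constants. Since $\tilde g$ and its primitive are bounded, a Palais--Smale sequence $u_k$ satisfies $\int_V\tilde g(u_k)\,d\mu\to -4\pi N/\lambda$ (test with the constant $1$), $\|\nabla u_k\|_2$ is bounded (test with $u_k-\bar u_k$ and use the Poincar\'e inequality), and then $\bar u_k$ is bounded (from boundedness of $\tilde J_\lambda(u_k)$ together with control of $\sum_s u_k(p_s)$); so $(u_k)$ is bounded in $\R^{\,|V|}$ and the Palais--Smale condition holds. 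The mountain-pass theorem then yields a critical point $u_2$, and in this finite-dimensional setting the possible non-strictness of the local minimum $u_1$ is harmless: if $u_1$ is not isolated there are already infinitely many solutions near it, while if it is isolated it is a strict local minimum, the mountain-pass level exceeds $J_\lambda(u_1)$, and $u_2\ne u_1$.

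It remains to treat $\lambda=\hat\lambda$. Take $\lambda_n\downarrow\hat\lambda$ with solutions $u_n<0$; testing \eqref{E1} with $u_n$ and using the Poincar\'e inequality gives $\|\nabla u_n\|_2\le C\bigl(1+|\bar u_n|^{1/2}\bigr)$, so if $\bar u_n\to-\infty$ then $u_n\to-\infty$ uniformly on $V$ and $\int_V g(u_n)\,d\mu\to 0$, contradicting $\int_V g(u_n)\,d\mu=-4\pi N/\lambda_n\to -4\pi N/\hat\lambda\ne 0$. Hence $(u_n)$ is bounded in $\R^{\,|V|}$ and, along a subsequence, converges to a solution of \eqref{E1} at $\lambda=\hat\lambda$ (non-constant, again because of the Dirac masses); this gives $\hat\lambda\in\Lambda$ and completes the proof. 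The two delicate points are exactly these compactness statements: the boundedness of Palais--Smale sequences, which is what the truncation buys (once the nonlinearity and its primitive are bounded, the linear term $4\pi\sum_s u(p_s)$ is dominated by the gradient term), and the uniform bound on $u_{\lambda_n}$ as $\lambda_n\downarrow\hat\lambda$, where the key is that $\int_V g(u_n)$ is pinned away from $0$ so the solutions cannot slide off to $-\infty$; verifying that the constrained minimizer $u_1$ is genuinely interior to $\mathcal K$ requires similar care.
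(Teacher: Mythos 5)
Your proposal is correct, and its skeleton coincides with the paper's: integrate to get the necessary bound $\lambda\ge \tfrac{6^6}{5^5}\tfrac{4\pi N}{\mathrm{Vol}(V)}$, use sub/super-solutions and monotone iteration to show the solvable set is a half-line, capture the endpoint $\hat\lambda$ by a compactness argument, obtain a local minimum by constrained minimization for $\lambda>\hat\lambda$, and run a mountain-pass argument (with the same non-strict/strict local minimum dichotomy) for the second solution. You differ in three implementations, all sound. First, you keep the Dirac term $4\pi\sum_s u(p_s)$ in the functional rather than first subtracting a solution $u_0$ of $\Delta u_0=4\pi\sum_s\delta_{p_s}-\tfrac{4\pi N}{\mathrm{Vol}(V)}$ as the paper does; on a finite graph this is cosmetic. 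Second, for Palais--Smale you truncate the nonlinearity to $\tilde g(t)=g(\min(t,0))$ so that $\tilde g$ and its primitive are bounded, after which boundedness of $\|\nabla u_k\|_2$ and of the means follows from testing with $1$ and with $u_k-\bar u_k$; the paper instead proves PS for the untruncated functional through a longer chain of H\"older estimates on $\int_V e^{k(u_0+v_n)}d\mu$. Your route is cleaner but hinges on two observations you correctly make: critical points of the truncated functional are still strictly negative (by the maximum principle plus connectedness), hence solve \eqref{E1}, and the local minimum survives truncation because it is strictly negative. Third, at $\lambda=\hat\lambda$ you get compactness from the a priori bound $\|\nabla u_n\|_2\le C(1+|\bar u_n|^{1/2})$ together with the identity $\lambda_n\int_V g(u_n)\,d\mu=-4\pi N$ being pinned away from zero, which rules out uniform blow-down; the paper instead uses monotonicity of $\lambda\mapsto u_\lambda$ and a case analysis on the sets where the limit is finite or $-\infty$. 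Your version avoids that case analysis entirely. The one place where real work is hidden behind ``I would check'' is the verification that the minimizer of $J_\lambda$ over the order interval $\{u_{\lambda'}\le u\le 0\}$ is an unconstrained critical point (the Kuhn--Tucker argument at an active obstacle, using that $u_{\lambda'}$ is a strict subsolution and $0$ a supersolution); this is standard on a finite graph but should be written out, since it is the step that converts the constrained minimizer into the first solution and into a genuine local minimum.
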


The rest of the paper is arranged as below. In section 2, we present some results that we will use in the following pages. In section 3, we prove Theorem \ref{t1}.

\section{Preliminary results}

 For each edge $xy \in E$, We suppose that its weight $w_{xy}>0$ and that $w_{xy}=w_{yx}$. Set $\mu: V \to (0,+\infty)$ be a finite measure. For any function $u: V \to \mathbb{R}$, the Laplacian of $u$ is defined by 
\begin{equation}\label{1}
	\Delta u(x)=\frac{1}{\mu(x)} \sum_{y \sim x} w_{y x}(u(y)-u(x)),
\end{equation}
where $y \sim x$ means $xy \in E$. The gradient form of $u$ reads 
\begin{equation}
	\Gamma(u, v)(x)=\frac{1}{2 \mu(x)} \sum_{y \sim x} w_{x y}(u(y)-u(x))(v(y)-v(x)).
\end{equation}
We denote the length of the gradient of $u$ by
\begin{equation*}
	|\nabla u|(x)=\sqrt{\Gamma(u)(x)}=\left(\frac{1}{2 \mu(x)} \sum_{y \sim x} w_{x y}(u(y)-u(x))^{2}\right)^{1 / 2}.
\end{equation*}
Denote, for any function $
u: V \rightarrow \mathbb{R}
$, an integral of $u$ on $V$ by $\int \limits_{V} u d \mu=\sum\limits_{x \in V} \mu(x) u(x)$. Denote  $|V|$=$ \text{Vol}(V)=\sum \limits_{x \in V} \mu(x)$ the volume of $V$. For $p \ge 1$, denote $|| u ||_{p}:=(\int \limits_{V} |u|^{p} d \mu)^{\frac{1}{p}}$. Define a sobolev space and a norm on it by 
\begin{equation*}
	W^{1,2}(V)=\left\{u: V \rightarrow \mathbb{R}: \int \limits_{V} \left(|\nabla u|^{2}+u^{2}\right) d \mu<+\infty\right\},
\end{equation*}
and \begin{equation*}
\|u\|_{H^{1}(V)}=	\|u\|_{W^{1,2}(V)}=\left(\int \limits_{V}\left(|\nabla u|^{2}+u^{2}\right) d \mu\right)^{1 / 2}.
\end{equation*}

To prove our main result, we need the following Sobolev embedding, Poincaré  inequality and Maximum principle on graphs.
\begin{Lemma}\label{21}
	{\rm (\cite[Lemma 5]{ALY})} Let $G=(V,E)$ be a finite graph. The sobolev space $W^{1,2}(V)$ is precompact. Namely, if ${u_j}$ is bounded in $W^{1,2}(V)$, then there exists some $u \in W^{1,2}(V)$ such that up to a subsequence, $u_j \to u$ in $W^{1,2}(V)$.
\end{Lemma}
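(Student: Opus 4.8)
The plan is to exploit the fact that $V$ is a finite set, which makes $W^{1,2}(V)$ finite-dimensional; precompactness of bounded sets then reduces to the Bolzano--Weierstrass theorem. First I would observe that every function $u\colon V\to\mathbb{R}$ is determined by the finite tuple of its values $(u(x))_{x\in V}$, and since all the integrals appearing in the definition of $W^{1,2}(V)$ are finite sums over the $|V|$ vertices, the condition $\int_V(|\nabla u|^2+u^2)\,d\mu<+\infty$ holds automatically. Hence $W^{1,2}(V)$ coincides with the entire space of real-valued functions on $V$, a vector space of dimension $|V|$.

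Next I would check that $\|\cdot\|_{W^{1,2}(V)}$ is a genuine norm on this finite-dimensional space. Homogeneity and the triangle inequality are inherited from the underlying $L^2$-type expressions, and positive-definiteness is the only point that uses the hypothesis $\mu(x)>0$: if $\|u\|_{W^{1,2}(V)}=0$, then in particular $\sum_{x\in V}\mu(x)u(x)^2=0$, which forces $u(x)=0$ at every vertex, so $u\equiv 0$.

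The core step is the extraction of a convergent subsequence. Given $\{u_j\}$ bounded in $W^{1,2}(V)$, the bound on $\int_V u_j^2\,d\mu=\sum_{x\in V}\mu(x)u_j(x)^2$ together with $\mu(x)>0$ shows that for each fixed vertex $x$ the real sequence $\{u_j(x)\}_j$ is bounded. Since $V$ has only finitely many vertices, I would apply the Bolzano--Weierstrass theorem successively over these vertices to obtain a single subsequence, still denoted $\{u_j\}$, along which $u_j(x)\to u(x)$ for every $x\in V$, where $u$ is the function defined by these limiting values. As noted above, $u\in W^{1,2}(V)$ automatically.

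Finally I would upgrade pointwise convergence to norm convergence. Because $\|u_j-u\|_{W^{1,2}(V)}^2$ is a finite sum of continuous functions of the differences $u_j(x)-u(x)$, each of which tends to $0$, the whole expression tends to $0$, giving $u_j\to u$ in $W^{1,2}(V)$. I do not expect a substantive obstacle here: the statement is, in essence, the Heine--Borel property of finite-dimensional normed spaces, and the only care needed is in verifying that the $W^{1,2}$ expression is positive-definite (using $\mu>0$) and that convergence in this norm is equivalent to vertexwise convergence on the finite set $V$.
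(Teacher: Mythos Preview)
Your argument is correct: since $V$ is finite, $W^{1,2}(V)$ is a finite-dimensional real vector space with a genuine norm (positive-definiteness coming from $\mu>0$), so bounded sequences have convergent subsequences by Bolzano--Weierstrass/Heine--Borel, and pointwise convergence on the finite vertex set immediately upgrades to norm convergence.

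Note, however, that the paper does not supply its own proof of this lemma; it simply quotes the result from \cite[Lemma~5]{ALY}. So there is no in-paper argument to compare against. Your proof is the standard one and is essentially what the cited reference does as well: reduce to the finite-dimensionality of the function space over a finite graph. There is nothing substantively different in approach, and your write-up would serve perfectly well as a self-contained justification should one be desired.
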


\begin{Lemma}\label{2.2}
{\rm (\cite[Lemma 6]{ALY})}	Let $G = (V, E)$ be a finite graph. For all functions $u : V \to \mathbb{R}$ with $\int \limits_{V} u d\mu = 0$, there 
	exists some constant $C$ depending only on $G$ such that $\int \limits_{V} u^2 d\mu \le C \int \limits_{V} |\nabla u|^2 d\mu$.
\end{Lemma}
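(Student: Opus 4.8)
The plan is to reduce the inequality to the strict positivity of the first nonzero eigenvalue of the graph Laplacian, exploiting that on a finite graph the space of functions $V\to\mathbb{R}$ is finite-dimensional. First I would record, using the symmetry $w_{xy}=w_{yx}$, the representation of the Dirichlet energy
\[
\int_V |\nabla u|^2\, d\mu = \frac{1}{2}\sum_{x\in V}\sum_{y\sim x} w_{xy}\bigl(u(y)-u(x)\bigr)^2,
\]
which is manifestly nonnegative and, since all weights are positive, vanishes precisely when $u(y)=u(x)$ for every pair of adjacent vertices.

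Next I would set up a constrained minimization. Let
\[
B = \Bigl\{\, u:V\to\mathbb{R} \ :\ \int_V u\, d\mu = 0,\ \int_V u^2\, d\mu = 1 \,\Bigr\}.
\]
Because $G$ is finite, the ambient space of functions is finite-dimensional, so $B$ is closed and bounded, hence compact. The functional $u\mapsto \int_V|\nabla u|^2\, d\mu$ is a continuous quadratic form, so it attains its minimum over $B$ at some $u_0$; set $\lambda_1 := \int_V |\nabla u_0|^2\, d\mu \ge 0$.

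The crux is to show $\lambda_1>0$, and this is where connectedness enters. Suppose $\lambda_1=0$. Then the representation above forces $u_0(y)=u_0(x)$ whenever $y\sim x$; since $G$ is connected, any two vertices are joined by a path, so $u_0$ is constant on $V$. The constraint $\int_V u_0\, d\mu = 0$ then yields $u_0\equiv 0$, contradicting $\int_V u_0^2\, d\mu = 1$. Hence $\lambda_1>0$. I expect this strict-positivity step to be the only real obstacle; ruling out nonconstant null directions of the Dirichlet energy is exactly the place where the hypothesis that the graph is connected is indispensable.

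Finally, set $C = 1/\lambda_1$, a constant depending only on $G$. Given any $u$ with $\int_V u\, d\mu = 0$: if $u\equiv 0$ the inequality is trivial; otherwise $v := u/\|u\|_2$ lies in $B$, so $\int_V |\nabla v|^2\, d\mu \ge \lambda_1$. Multiplying through by $\|u\|_2^2$ and using the quadratic homogeneity of the Dirichlet energy gives $\int_V |\nabla u|^2\, d\mu \ge \lambda_1 \int_V u^2\, d\mu$, which is exactly the asserted inequality with $C=1/\lambda_1$.
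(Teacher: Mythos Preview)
The paper does not supply its own proof of this lemma; it is quoted directly from \cite[Lemma~6]{ALY} without argument. Your proposal is a correct, self-contained proof: the representation of the Dirichlet energy is accurate given the paper's definition of $|\nabla u|$, the constraint set $B$ is compact because the ambient space is finite-dimensional, and connectedness of $G$ is used exactly where it must be---to rule out a nonzero constant minimizer and hence force $\lambda_1>0$. The homogeneity step at the end is routine. This is the standard spectral-gap argument for the Poincar\'e inequality on finite graphs, and nothing is missing.
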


\begin{Lemma}\label{22}
	 {\rm (\cite[Lemma 4.1]{HLY})} Let $G=(V,E)$, where $V$ is a finite set, and $K \ge 0$ is constant. Suppose a real function $u(x): V\to \mathbb{R}$ satisfies $(\Delta-K)u(x) \ge 0$ for all $x\in \mathbb{R}$, then $u(x) \le 0$ for all $x \in V $.
\end{Lemma}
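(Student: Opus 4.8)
The plan is to establish this discrete maximum principle by inspecting $u$ at a vertex where it attains its maximum. Since $V$ is a finite set, $u$ is bounded above and attains its maximum at some vertex $x_0 \in V$, so that $u(y) \le u(x_0)$ for every $y \in V$. The idea is to evaluate the hypothesis $(\Delta - K)u(x_0) \ge 0$ at this extremal vertex and read off the sign of $u(x_0)$ directly, without any iteration over the graph.

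First I would pin down the sign of the Laplacian at $x_0$. From the definition \eqref{1},
\[
\Delta u(x_0) = \frac{1}{\mu(x_0)} \sum_{y \sim x_0} w_{y x_0}\bigl(u(y) - u(x_0)\bigr) \le 0,
\]
since $\mu(x_0) > 0$, every weight satisfies $w_{y x_0} > 0$, and each difference $u(y) - u(x_0)$ is non-positive by the maximality of $u(x_0)$; thus every summand is non-positive.

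Next I would substitute this into the hypothesis. Rewriting $(\Delta - K)u(x_0) \ge 0$ as $\Delta u(x_0) \ge K u(x_0)$ and combining with the previous step gives $K u(x_0) \le \Delta u(x_0) \le 0$. When $K > 0$ this immediately yields $u(x_0) \le 0$, and since $x_0$ is a global maximizer we conclude $u(x) \le u(x_0) \le 0$ for every $x \in V$, which is the assertion. The argument is therefore a one-step extremal comparison rather than a contradiction or a propagation scheme.

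The point demanding care is the degenerate endpoint $K = 0$, where the bound $K u(x_0) \le 0$ carries no information and the bare inequality $\Delta u \ge 0$ is satisfied by any positive constant function. Hence the clean argument genuinely relies on $K > 0$, which is the regime in which the lemma is invoked, $K$ arising there as a strictly positive zeroth-order coefficient of a linearized operator. If one nonetheless wishes to cover $K = 0$, the same extremal analysis still applies once a normalization forcing $\min_V u \le 0$ is available: at the maximizer the identity $\Delta u(x_0) = 0$ forces $u(y) = u(x_0)$ across each incident edge, and connectedness of $G$ then propagates this equality to make $u$ constant, hence non-positive.
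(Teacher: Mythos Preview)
The paper does not supply its own proof of this lemma; it is simply quoted from \cite[Lemma 4.1]{HLY}. So there is no in-paper argument to compare against, and the relevant question is just whether your argument is correct.

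For $K>0$ your proof is the standard one and is complete: the Laplacian is non-positive at a global maximizer by the very definition \eqref{1}, the hypothesis then forces $K u(x_0)\le 0$, and division by $K$ finishes. This is exactly the kind of one-line extremal argument one expects, and it is how the lemma is actually used throughout Section~3 of the paper (each invocation has a strictly positive zeroth-order coefficient).

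Your remark about $K=0$ is also well taken: as literally stated the lemma is false at $K=0$, since any positive constant satisfies $\Delta u=0\ge 0$. The rescue you sketch---constancy via strong maximum principle plus an extra normalization $\min_V u\le 0$---is a correct fix, but it imports a hypothesis that is not in the statement. It would be cleaner simply to note that the case $K=0$ is vacuous for the purposes of this paper and that the lemma should be read with $K>0$.
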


\begin{Lemma}\label{24}
		Let $G = (V, E)$ be a finite graph. For all functions $u : V \to \mathbb{R}$ with $ \int \limits_{V} u d\mu = 0$ and $p\ge 1$, there 
	exists a constant $C=C(G,p)$ such that 
	$$||u||_{p} \le C  ||\nabla u||_2 .$$
\end{Lemma}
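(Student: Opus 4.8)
The plan is to reduce this $L^p$ Poincar\'e inequality to the $L^2$ version already recorded in Lemma \ref{2.2}, exploiting that on a finite vertex set every $L^p$ norm is comparable to the $L^\infty$ norm. Throughout write $\mu_{\min}=\min_{x\in V}\mu(x)$, which is strictly positive since $V$ is finite, and recall $\mathrm{Vol}(V)=\sum_{x\in V}\mu(x)<+\infty$.

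First I would record the elementary pointwise estimate: for every $x\in V$,
\[
\mu(x)\,u(x)^2\le\sum_{y\in V}\mu(y)\,u(y)^2=\|u\|_2^2 ,
\]
hence $|u(x)|\le \mu_{\min}^{-1/2}\|u\|_2$ and therefore $\|u\|_\infty\le \mu_{\min}^{-1/2}\|u\|_2$. Next, for any $p\ge 1$,
\[
\|u\|_p^p=\sum_{x\in V}\mu(x)\,|u(x)|^p\le\|u\|_\infty^{\,p}\sum_{x\in V}\mu(x)=\mathrm{Vol}(V)\,\|u\|_\infty^{\,p},
\]
so $\|u\|_p\le \mathrm{Vol}(V)^{1/p}\|u\|_\infty$. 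Finally, since $\int_V u\,d\mu=0$ by hypothesis, Lemma \ref{2.2} gives a constant $C_0=C_0(G)$ with $\|u\|_2^2\le C_0\|\nabla u\|_2^2$. Chaining the three bounds yields
\[
\|u\|_p\le \mathrm{Vol}(V)^{1/p}\,\mu_{\min}^{-1/2}\,C_0^{1/2}\,\|\nabla u\|_2 ,
\]
so the claim holds with $C(G,p)=\mathrm{Vol}(V)^{1/p}\mu_{\min}^{-1/2}C_0^{1/2}$.

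There is no genuine obstacle here; the only thing to keep track of is that the constant really does depend on $p$ (through the factor $\mathrm{Vol}(V)^{1/p}$) and on the geometry of $G$ (through $\mu_{\min}$ and the Poincar\'e constant $C_0$), but on nothing else. As an alternative one could argue purely abstractly: the mean-zero functions form a finite-dimensional vector space on which $u\mapsto\|\nabla u\|_2$ is a norm (if $\|\nabla u\|_2=0$ then $u$ is constant on each connected component, so by connectedness of $G$ it is constant, and the zero-mean condition forces $u\equiv 0$), whence equivalence of norms on a finite-dimensional space gives $\|u\|_p\le C\|\nabla u\|_2$; this is shorter but does not exhibit $C$ explicitly.
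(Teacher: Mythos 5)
Your proof is correct and follows essentially the same route as the paper: bound $\|u\|_\infty$ by $\mu_{\min}^{-1/2}\|u\|_2$, invoke the $L^2$ Poincar\'e inequality of Lemma \ref{2.2}, and then control $\|u\|_p$ by $\mathrm{Vol}(V)^{1/p}\|u\|_\infty$. Your version is in fact slightly more carefully written than the paper's (which compresses the first two steps into one display), and the alternative norm-equivalence remark is a valid, if less explicit, variant.
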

\begin{proof}
	By Lemma \ref{2.2}, there exists $C=C(G)$ such that $$ M:=\max \limits_{x\in V} |u(x)| \le (\frac{C}{ \min \limits _{V} \mu} \int \limits_{V} |\nabla u|^2 d\mu)^{\frac{1}{2}}.$$ Thus we have $||u||_{p} \le (\frac{C}{\min \limits_{V} \mu} \int \limits_{V} |\nabla u|^2 d\mu)^{\frac{1}{2}} |V|^{1/p}:=C_2 ||\nabla u||_2$.
\end{proof}

\section{The proof of Theorem 1.1}
Since $\int \limits_{V}-\frac{4 \pi N}{\text{Vol}(V)}+4 \pi \sum \limits_{j=1}^{N} \delta p_{j} d \mu=0$, we can choose a solution $u_0$ of the equation  
\begin{equation}\label{5}
	\Delta u_{0}=-\frac{4 \pi N}{\text{Vol(V)}}+4 \pi \sum_{j=1}^{N} \delta_{p_{j}}.
\end{equation}

Letting $u=u_0+v$, the equation \eqref{E1} can be reduced to the following question
\begin{equation}\label{6}
	\Delta v=\lambda e^{u_{0}+v}\left(e^{u_{0}+v}-1\right)^5+\frac{4 \pi N}{\text{Vol}(V)}.
\end{equation}
Set $F(y):=(e^y-1)^5e^y$ on $\mathbb{R}$, it is clear that $F$ has a unique minimal value $-\frac{5^5}{6^6}.$
Thus, it follows from \eqref{6} that
$$0=\int \limits_{V} \Delta v d \mu \ge \lambda \int \limits_{V} -\frac{5^5}{6^6} d\mu +4\pi N=-\frac{5^5}{6^6}\lambda \text{Vol}(V)+ 4\pi N.$$ This implies that $$\lambda \ge \frac{6^6}{5^5} \frac{4\pi N} {|V|},$$ which is a necessary condition for the existence of solutions to \eqref{E1}. 
 
 To solve \eqref{6}, for a constant $K\ge \lambda$, we define a sequence $\{w_n\}$ by a monotone iterative scheme: 
 \begin{equation}\label{9}
 		\begin{aligned}
 			(\Delta-K) W_{n} &=\lambda e^{u_{0}+W_{n-1}}\left(e^{u_{0}+W_{n-1}}-1\right)^{5}-K W_{n-1}+\frac{4 \pi N}{\operatorname{Vol}(V)},~ n=1,2, \ldots \\
 			W_{0} &=-u_{0} .
 		\end{aligned}
 	\end{equation}
 
 Next, we establish a solution to \eqref{6} by a supersolution and subsolution method.
 \begin{Definition}\label{s}
 	A function $u$ on $V$ is called a subsolution of \eqref{6} if 
 	\begin{equation*}
 		\Delta u\ge \lambda e^{u_{0}+u}\left(e^{u_{0}+u}-1\right)^5+\frac{4 \pi N}{\operatorname{Vol}(V)}.
 	\end{equation*}
 \end{Definition}
 
 \begin{Lemma}\label{2}
 	Let ${W_n}$ be a 
 	sequence defined by scheme \eqref{9} with $K\ge\lambda$. Then 
 	\begin{equation}\label{10}
 		W_- \le \dots \le W_n \le \dots \le W_2 \le W_1 \le W_0
 	\end{equation}
 for any subsolution $W_{-}$ of \eqref{6}.
 \end{Lemma}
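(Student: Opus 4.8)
The plan is to prove Lemma \ref{2} by induction on $n$, simultaneously establishing two facts at each step: that $W_n \le W_{n-1}$, and that $W_- \le W_n$. First I would set up the base case. For $W_1 \le W_0$: subtracting the equation for $W_1$ from the trivial identity $(\Delta - K)W_0 = (\Delta - K)W_0$, one checks that $(\Delta - K)(W_1 - W_0)$ equals $\lambda e^{u_0 + W_0}(e^{u_0+W_0}-1)^5 - K W_0 + \frac{4\pi N}{\mathrm{Vol}(V)} - \Delta W_0 + K W_0$; using $W_0 = -u_0$ so that $u_0 + W_0 = 0$, the nonlinear term $\lambda e^{0}(e^{0}-1)^5 = 0$ vanishes, and what remains is $\frac{4\pi N}{\mathrm{Vol}(V)} - \Delta(-u_0) = \frac{4\pi N}{\mathrm{Vol}(V)} + \Delta u_0 = 4\pi\sum_s \delta_{p_s} \ge 0$ by \eqref{5}. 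Hence $(\Delta - K)(W_1 - W_0) \ge 0$, and Lemma \ref{22} (the maximum principle, applicable since $K \ge \lambda > 0$) gives $W_1 - W_0 \le 0$.

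Next, for the inductive step I would assume $W_n \le W_{n-1}$ and deduce $W_{n+1} \le W_n$. Subtracting the defining relations \eqref{9} for indices $n+1$ and $n$ yields
\begin{equation*}
(\Delta - K)(W_{n+1} - W_n) = \lambda\bigl(F(u_0 + W_n) - F(u_0 + W_{n-1})\bigr) - K(W_n - W_{n-1}),
\end{equation*}
where $F(y) = e^y(e^y-1)^5$. Writing $g(t) = \lambda F(t) - Kt$, the right-hand side is $g(u_0 + W_n) - g(u_0 + W_{n-1})$. The key analytic point is that $g$ is nonincreasing on the relevant range, i.e.\ $g'(t) = \lambda F'(t) - K \le 0$; one needs a bound on $\sup F'$ and the choice of $K$ large enough (the iteration is stated for $K \ge \lambda$, so presumably $\sup_{t} F'(t) \le 1$ on the range that matters, or the statement implicitly restricts attention via the a priori $C^0$ bounds). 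Granting $g$ nonincreasing and $W_n - W_{n-1} \le 0$, we get $g(u_0+W_n) - g(u_0 + W_{n-1}) \ge 0$, so $(\Delta - K)(W_{n+1} - W_n) \ge 0$ and Lemma \ref{22} gives $W_{n+1} \le W_n$.

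For the lower bound $W_- \le W_n$ I would again induct. Since $W_-$ is a subsolution, $\Delta W_- \ge \lambda F(u_0 + W_-) + \frac{4\pi N}{\mathrm{Vol}(V)}$, hence $(\Delta - K)W_- \ge \lambda F(u_0 + W_-) - K W_- + \frac{4\pi N}{\mathrm{Vol}(V)}$. Comparing with \eqref{9} at index $n$,
\begin{equation*}
(\Delta - K)(W_- - W_n) \ge \lambda\bigl(F(u_0 + W_-) - F(u_0 + W_{n-1})\bigr) - K(W_- - W_{n-1}) = g(u_0+W_-) - g(u_0+W_{n-1}).
\end{equation*}
Here the base case needs $W_- \le W_0 = -u_0$, which should follow from the maximum principle applied to the subsolution inequality at level $W_0$ together with $u_0 + W_0 = 0$ — essentially the same computation as the $W_1 \le W_0$ step but comparing $W_-$ against $W_0$. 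For the inductive step, assuming $W_- \le W_{n-1}$ and using that $g$ is nonincreasing, the right-hand side is $\ge 0$, so $(\Delta - K)(W_- - W_n) \ge 0$ and Lemma \ref{22} yields $W_- \le W_n$. Chaining everything gives the full chain \eqref{10}.

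The main obstacle I anticipate is the monotonicity of $g(t) = \lambda F(t) - Kt$: this is what forces the choice $K \ge \lambda$ and requires controlling $F'(t) = e^t(e^t-1)^4(6e^t - 1)$ on the range of values $u_0 + W_n$ can take. Since the $W_n$ are squeezed between $W_-$ and $W_0 = -u_0$, the argument of $F$ stays in $[u_0 + W_-, 0]$, so $F' \le F'(0) = 0$ on that interval whenever $u_0 + W_- \le 0$ — and indeed $F'$ is nonpositive on $(-\infty, \ln(1/6)]$ and also in a neighborhood below $0$ where $(e^t-1)^4 \ge 0$ but $6e^t - 1 > 0$, so care is needed: one checks $F' \le 0$ exactly on $t \le 0$ fails, so the comparison must be confined to $t \le \ln(1/6)$ or else $K$ must dominate $\sup_{[u_0+W_-,\,0]} \lambda F'$. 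I would handle this by first proving the $C^0$ bound $u_0 + W_n \le 0$ inductively (which is the content of $W_n \le -u_0 = W_0$ already in the chain), then noting $F$ is monotone on $(-\infty,0]$ in the weak sense needed — or, more robustly, by absorbing any positive part of $\lambda F'$ into $K$ via a large enough constant, consistent with the hypothesis $K \ge \lambda$ if $\sup \lambda F' \le \lambda$ on the operative range, which holds since $F'(t) \le 1$ for $t \le 0$.
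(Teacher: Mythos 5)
Your proposal is correct and follows essentially the same route as the paper: induction combined with the mean value theorem and the discrete maximum principle (Lemma \ref{22}), where the key analytic fact --- used implicitly in the paper's step $[\lambda F'(u_0+\xi)-K](W_k-W_{k-1})\ge(\lambda-K)(W_k-W_{k-1})$ --- is exactly your observation that $F'(t)=e^{t}(e^{t}-1)^{4}(6e^{t}-1)\le 1$ for $t\le 0$, so $K\ge\lambda$ makes $g(t)=\lambda F(t)-Kt$ nonincreasing on the operative range $u_0+\xi\le 0$. The one point to tighten is the base case $W_-\le W_0$: since $\lambda F(u_0+W_-)$ may be negative, Lemma \ref{22} does not apply directly as in the $W_1\le W_0$ step; instead, as the paper does, evaluate at a maximum point $x_0$ of $W_--W_0$ and note that $(W_--W_0)(x_0)>0$ would force $u_0(x_0)+W_-(x_0)>0$, hence $F(u_0+W_-)(x_0)>0$ and $\Delta(W_--W_0)(x_0)>0$, contradicting $\Delta(W_--W_0)(x_0)\le 0$ at a maximum.
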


 \begin{proof}
 	By \eqref{5} and \eqref{9}, we have 
 	\begin{equation}
 		(\Delta-K)(W_1-W_0)=4\pi\sum_{s=1}^{N} \delta_{p_{s}}>0, x\in V.
 	\end{equation}
 By Lemma \ref{22}, we deduce that $$(W_1-W_0)(x)\le 0$$ for all $x\in V$.
 Suppose that $$W_0\ge W_1 \ge \dots \ge W_k.$$ By \eqref{9}, we conclude that 
 \begin{equation*}
 	\begin{aligned}
 		(\Delta-K)\left(W_{k+1}-W_{k}\right) &= \lambda\mathrm{e}^{u_{0}+W_{k}}\left(\mathrm{e}^{u_{0}+W_{k}}-1\right)^{5}-\lambda\mathrm{e}^{u_{0}+W_{k-1}}\left(\mathrm{e}^{u_{0}+W_{k-1}}-1\right)^{5}-K\left(W_{k}-W_{k-1}\right) \\
 		&=\left[\lambda \mathrm{e}^{u_{0}+\xi}\left(\mathrm{e}^{u_{0}+\xi}-1\right)^{4}\left(6 \mathrm{e}^{u_{0}+\xi}-1\right)-K\right]\left(W_{k}-W_{k-1}\right) \\
 		& \geq( \lambda-K)\left(W_{k}-W_{k-1}\right) \\
 		& \geq 0,
 	\end{aligned}
 \end{equation*}
 where $W_k \le \xi \le W_{k-1}$. By Lemma \ref{22}, we see that $W_{k+1} \le W_{k}$ on $V$. 
 
 Set $(W_{-}-W_0) (x_0)=\max \limits_{x\in V} (W_{-}-W_0)(x)$, we claim that $$(W_{-}-W_0)(x_0)\le 0.$$ Otherwise, $(W_{-}-W_0)(x_0)>0$. It follows that 
 $$\Delta (W_{-}-W_0)(x_0)\ge \lambda e^{u_0+W_{-}(x_0)}(e^{u_0+W_{-}(x_0)}-1))^5 +\frac{4\pi N}{\text{Vol}(V)} + \Delta u_0 >0.$$  By the definition of Laplace operator, we obtain  $$\Delta(W_{-}-W_0)(x_0)\le 0.$$ This is a contradicion. Thus we have $$(W_{-}-W_0)(x_0)\le 0,$$ which implies that $$W_{-}-W_0 \le 0$$ on $V$. Assume that $W_{-}-W_k \le 0$ for some integer $k\ge 0$. Thanks to $W_{-}$ is a subsolution of \eqref{6} and $K \ge \lambda$, we deduce that
 \begin{equation*}
 	\begin{aligned}
 		(\Delta-K)\left(W_{-}-W_{k+1}\right) & \geq \lambda\left[\mathrm{e}^{u_{0}+W_{-}}\left(\mathrm{e}^{u_{0}+W_{-}}-1\right)^{5}-\mathrm{e}^{u_{0}+W_{k}}\left(\mathrm{e}^{u_{0}+W_{k}}-1\right)^{5}\right]-K\left(W_{-}-W_{k}\right) \\
 		& \geq\left[\lambda \mathrm{e}^{u_{0}+\eta}\left(\mathrm{e}^{u_{0}+\eta}-1\right)^{4}\left(6 \mathrm{e}^{u_{0}+\eta}-1\right)-K\right]\left(W_{-}-W_{k}\right) \\
 		& \geq( \lambda-K)\left(W_{-}-W_{k}\right) \\
 		& \geq 0,
 	\end{aligned}
 \end{equation*}
 where $W_{-}\le \eta \le W_{k}$. By Lemma \ref{22}, we have $W_{-}\le W_{k+1}$ on $V$. 
 
 We now complete the proof.
 \end{proof}

\begin{Lemma}
	If $\lambda>0$ is sufficiently large, then there exists a solution of \eqref{6} on $V$.
\end{Lemma}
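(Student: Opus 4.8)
The plan is to construct an explicit subsolution of \eqref{6} when $\lambda$ is large, and then let the monotone iteration \eqref{9} converge, from above, to a solution sitting over that subsolution.

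First I would exhibit the subsolution. Since $F(y)=(e^y-1)^5e^y$ attains its minimum $-\tfrac{5^5}{6^6}$ at $y=-\ln 6$, set $W_-:=-u_0-\ln 6$, so that $u_0+W_-\equiv -\ln 6$ on $V$. By \eqref{5} one has $\Delta W_-=-\Delta u_0=\tfrac{4\pi N}{\text{Vol}(V)}-4\pi\sum_{s=1}^{N}\delta_{p_s}$, and therefore the inequality of Definition \ref{s},
\[
\Delta W_- \;\ge\; \lambda F(-\ln 6)+\frac{4\pi N}{\text{Vol}(V)}\;=\;-\frac{5^5}{6^6}\lambda+\frac{4\pi N}{\text{Vol}(V)},
\]
is equivalent to $\tfrac{5^5}{6^6}\lambda\ge 4\pi\sum_{s=1}^{N}\delta_{p_s}(x)$ for every $x\in V$. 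The right-hand side vanishes off $\{p_1,\dots,p_N\}$ and equals $4\pi/\mu(p_s)$ at $p_s$, so $W_-$ is a subsolution of \eqref{6} as soon as $\lambda\ge\lambda_0:=\tfrac{6^6}{5^5}\cdot\tfrac{4\pi}{\min_{1\le s\le N}\mu(p_s)}$.

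Next, fix $K\ge\lambda$ and consider the sequence $\{W_n\}$ from \eqref{9}; it is well defined because $\Delta-K$ is invertible on $\mathbb{R}^V$ (its eigenvalues are $\le -K<0$). For $\lambda\ge\lambda_0$, Lemma \ref{2} gives $W_-\le\cdots\le W_n\le\cdots\le W_1\le W_0$, so at each vertex the sequence $\{W_n(x)\}$ is nonincreasing and bounded below, hence convergent; since $V$ is finite this is convergence $W_n\to W$ in every norm on $\mathbb{R}^V$. Letting $n\to\infty$ in \eqref{9} and using continuity of $y\mapsto e^{u_0+y}(e^{u_0+y}-1)^5$ gives $(\Delta-K)W=\lambda e^{u_0+W}(e^{u_0+W}-1)^5-KW+\tfrac{4\pi N}{\text{Vol}(V)}$, i.e. $\Delta W=\lambda e^{u_0+W}(e^{u_0+W}-1)^5+\tfrac{4\pi N}{\text{Vol}(V)}$, so $W$ solves \eqref{6}. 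Hence \eqref{6} has a solution for every $\lambda\ge\lambda_0$, which proves the lemma.

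The monotone-iteration part is already packaged in Lemma \ref{2}, so the substance lies entirely in the choice of subsolution, and I do not expect a real obstacle. The only delicate point is the inequality of Definition \ref{s} at the singular vertices $p_s$, where $\Delta W_-$ acquires the negative contribution $-4\pi/\mu(p_s)$: this is precisely what forces $\lambda$ to be large. Any shift $W_-=-u_0+a$ with $a<0$ does the job; taking $a=-\ln 6$ simply yields the cleanest threshold by exploiting the minimum of $F$.
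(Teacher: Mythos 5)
Your proof is correct and follows essentially the same route as the paper: produce a subsolution of \eqref{6} for large $\lambda$, then invoke Lemma \ref{2} and pass to the limit in the monotone scheme \eqref{9}. The only difference is the choice of subsolution --- the paper takes a constant $\hat{W}_-\equiv -Q_0$ with $Q_0>\max_V u_0$ and an unspecified ``sufficiently large'' $\lambda$, whereas your $W_-=-u_0-\ln 6$ yields the explicit threshold $\lambda\ge \frac{6^6}{5^5}\cdot\frac{4\pi}{\min_s\mu(p_s)}$; both are valid.
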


\begin{proof}
	Assume that $u_0$ is a solution of \eqref{6}. Select a constant $Q_0$ such that $u_0<Q_0$. Let $\hat{W}_{-}\equiv -Q_0$ on $V$, then for sufficiently large $\lambda$, we have  $$0=\Delta \hat{W}_{-}> \lambda e^{u_0+\hat{W}_{-}}( e^{u_0+ \hat{W}_{-}}-1) ^5+\frac{4\pi N}{\operatorname{Vol}(V)}.$$ 
	Thus $\hat{W}_{-}$ is a subsolution of \eqref{6}. By Lemma \ref{2}, we get a sequence $\{W_{n} \}$ satisfying 
	\begin{equation*}\label{11}
		\hat{W}_- \le \dots \le W_n \le \dots \le W_2 \le W_1 \le -u_0.
	\end{equation*}	
	Thus we can define $w(x):=\lim \limits_{n\to +\infty} W_n (x)$. Letting $n\to +\infty$ in \eqref{9}, then we know that $w$ is a solution of \eqref{6}.
\end{proof}

In order to prove Lemma \ref{3.4}, we need the following proposition.
\begin{Lemma}\label{3}
	 If $u$ is a solution of equation \eqref{E1} in $V$, then $u<0$ on $V$.
\end{Lemma}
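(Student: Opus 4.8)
The plan is to run a discrete maximum-principle argument directly on \eqref{E1}. Let $M:=\max_{x\in V}u(x)$ and choose $x_0\in V$ with $u(x_0)=M$. From the definition \eqref{1} of $\Delta$ one has $\Delta u(x_0)=\frac{1}{\mu(x_0)}\sum_{y\sim x_0}w_{yx_0}\bigl(u(y)-u(x_0)\bigr)\le 0$, since each summand is nonpositive. Substituting into \eqref{E1} and using that $4\pi\sum_{s=1}^N\delta_{p_s}(x_0)\ge 0$, I obtain $\lambda e^{M}(e^{M}-1)^5\le 0$; as $\lambda>0$ and $e^{M}>0$, this forces $(e^{M}-1)^5\le 0$, hence $M\le 0$. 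Note that this step uses only $\lambda>0$.

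The remaining point is to improve $M\le 0$ to the strict inequality, and this is the only place where a little care is needed. Suppose, for contradiction, that $M=0$, and set $S:=\{x\in V:u(x)=0\}\neq\emptyset$. First I claim no $p_s$ lies in $S$: if $p_s\in S$, then evaluating the right-hand side of \eqref{E1} at $p_s$ gives $\lambda e^{0}(e^{0}-1)^5+4\pi\mu(p_s)^{-1}=4\pi\mu(p_s)^{-1}>0$, contradicting $\Delta u(p_s)\le 0$ (valid because $p_s$ is also a maximum point of $u$). Hence for every $x\in S$ the right-hand side of \eqref{E1} equals $\lambda e^{0}(e^{0}-1)^5=0$, so $\sum_{y\sim x}w_{yx}\bigl(u(y)-0\bigr)=0$; each summand being $\le 0$, every neighbour $y$ of $x$ also satisfies $u(y)=0$, i.e. belongs to $S$. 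Since $G$ is connected and $S\neq\emptyset$, it follows that $S=V$, so in particular $p_1\in S$, contradicting the previous step. Therefore $M<0$, i.e. $u<0$ on $V$.

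I expect no serious obstacle: the argument reduces to the discrete maximum principle together with the connectedness of $G$, and in particular does not require the Sobolev embedding or Poincaré inequality of Section 2. The only subtlety worth flagging is that the strict inequality genuinely relies on the presence of the Dirac sources $4\pi\sum_s\delta_{p_s}$ — without them $u\equiv 0$ would solve the equation — which is exactly what the connectedness propagation above exploits.
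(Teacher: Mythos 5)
Your proof is correct, and it follows the same basic strategy as the paper (evaluate the equation at a maximum point $x_0$ of $u$ and use that $\Delta u(x_0)\le 0$ there). The difference is that your version is actually more complete than the paper's. The paper argues: if $u(x_0)\ge 0$ then $e^{u(x_0)}-1\ge 0$, ``which implies that $\Delta u(x_0)>0$,'' contradicting $\Delta u(x_0)\le 0$. But that strict inequality is only immediate when $u(x_0)>0$ or when $x_0$ is one of the vortex points $p_s$; in the borderline case $u(x_0)=0$ with $x_0\notin\{p_1,\dots,p_N\}$ the right-hand side of \eqref{E1} at $x_0$ is exactly $0$, and one only gets $\Delta u(x_0)=0$, which is not yet a contradiction. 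Your connectedness propagation --- showing that the zero set $S=\{u=0\}$ would have to absorb all neighbours of each of its points and hence equal $V$, forcing some $p_s\in S$, which is impossible because the Dirac source makes the right-hand side strictly positive there --- is precisely the argument needed to close that case. Your closing observation that the strict sign genuinely requires $N\ge 1$ (since $u\equiv 0$ solves the sourceless equation) is also accurate and pinpoints why the extra step cannot be avoided.
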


\begin{proof}
	Suppose that $$u(x_0)=\max \limits_{x\in V} u(x),$$ we claim that $u(x_0)<0$. Suppose by way of contradiction that $u(x_0) \ge 0$, then $$e^{u(x_0)}-1 \ge 0,$$ which implies that $\Delta u(x_0) > 0$. By \eqref{1}, we have $$0\ge \Delta u(x_0).$$ This is a contradiction.
\end{proof}

\begin{Lemma}\label{3.4}
	There exists $\hat{\lambda} \ge \frac{4\pi N}{\text{Vol}(V)} \frac{6^6}{5^5}$ such that when $\lambda\ge \hat{\lambda}$, \eqref{E1} admits a solution, and when $\lambda<\hat{\lambda}$, \eqref{E1} admits no solutions.
\end{Lemma}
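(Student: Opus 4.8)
The plan is to set
\[
\hat\lambda:=\inf\{\lambda>0:\ \eqref{E1}\text{ has a solution}\}.
\]
This set is nonempty by the preceding lemma (solvability for large $\lambda$), and every element of it is $\ge\frac{6^6}{5^5}\frac{4\pi N}{\text{Vol}(V)}$ by the necessary condition established at the start of this section; hence $\hat\lambda$ is a well-defined real number with $\hat\lambda\ge\frac{6^6}{5^5}\frac{4\pi N}{\text{Vol}(V)}$. For $\lambda<\hat\lambda$, equation \eqref{E1} has no solution by the very definition of the infimum, so it remains only to prove solvability for every $\lambda\ge\hat\lambda$.

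For $\lambda>\hat\lambda$, choose $\lambda_0$ with $\hat\lambda\le\lambda_0<\lambda$ such that \eqref{E1} has a solution $u_{\lambda_0}$ for the parameter $\lambda_0$, and put $v_{\lambda_0}:=u_{\lambda_0}-u_0$, a solution of \eqref{6} for $\lambda_0$. By Lemma \ref{3} we have $u_{\lambda_0}<0$, hence $e^{u_{\lambda_0}}(e^{u_{\lambda_0}}-1)^5<0$, so multiplying this negative quantity by $\lambda>\lambda_0>0$ gives $\lambda e^{u_{\lambda_0}}(e^{u_{\lambda_0}}-1)^5\le\lambda_0 e^{u_{\lambda_0}}(e^{u_{\lambda_0}}-1)^5=\Delta v_{\lambda_0}-\frac{4\pi N}{\text{Vol}(V)}$; thus $v_{\lambda_0}$ is a subsolution of \eqref{6} for the parameter $\lambda$ in the sense of Definition \ref{s}, and moreover $v_{\lambda_0}<-u_0=W_0$. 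Applying Lemma \ref{2} with any $K\ge\lambda$ and $W_-=v_{\lambda_0}$, the iterates $\{W_n\}$ from \eqref{9} are nonincreasing and bounded below by $v_{\lambda_0}$, hence converge pointwise; letting $n\to\infty$ in \eqref{9} shows the limit solves \eqref{6} for $\lambda$, and adding back $u_0$ yields a solution of \eqref{E1}. (The same comparison, applied with a solution at a smaller parameter as subsolution, shows that the solution so obtained is monotone in $\lambda$; this can be used but is not strictly needed below.)

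The crux is solvability at $\lambda=\hat\lambda$. Take $\lambda_k\downarrow\hat\lambda$ with solutions $u_k$ of \eqref{E1} for $\lambda_k$. First, Lemma \ref{3} gives $u_k<0$, so $0<e^{u_k}<1$ and $|e^{u_k}(e^{u_k}-1)^5|\le1$; consequently $\|\Delta u_k\|_\infty$ is bounded uniformly in $k$ (the nonlinear term is at most $\lambda_k$ and the Dirac term does not depend on $k$). Writing $u_k=\bar u_k+u_k'$ with $\int_V u_k'\,d\mu=0$, integration by parts gives $\|\nabla u_k\|_2^2=-\int_V u_k\Delta u_k\,d\mu=-\int_V u_k'\Delta u_k\,d\mu\le\text{Vol}(V)\,\|\Delta u_k\|_\infty\max_V|u_k'|$, while the proof of Lemma \ref{24} gives $\max_V|u_k'|\le C\|\nabla u_k\|_2$ (since $u_k'$ has mean zero and $\nabla u_k'=\nabla u_k$); combining, $\|\nabla u_k\|_2$, and hence the oscillation $\max_V u_k-\min_V u_k=\max_V u_k'-\min_V u_k'$, is bounded uniformly in $k$. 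Next, integrating \eqref{E1} over $V$ and using $\int_V\Delta u_k\,d\mu=0$ yields $\int_V e^{u_k}(1-e^{u_k})^5\,d\mu=\frac{4\pi N}{\lambda_k}\to\frac{4\pi N}{\hat\lambda}>0$; since the integrand is $\le e^{\max_V u_k}$, this forces $\max_V u_k\ge\log\!\big(\frac{4\pi N}{\lambda_k\text{Vol}(V)}\big)$, a lower bound uniform in $k$. Together with the oscillation bound and $u_k<0$, the sequence $\{u_k\}$ is uniformly bounded, so, $V$ being finite, a subsequence converges pointwise to some $u_{\hat\lambda}$; passing to the limit in \eqref{E1} (every term is continuous and $\lambda_k\to\hat\lambda$) shows $u_{\hat\lambda}$ solves \eqref{E1} for $\lambda=\hat\lambda$. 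Hence $\hat\lambda$ lies in the solvability set, which completes the proof.

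The step I expect to be the main obstacle is the a priori bound at the threshold: ruling out that $u_k\to-\infty$ (pointwise somewhere, or uniformly) as $\lambda_k\downarrow\hat\lambda$. The bound $\|\Delta u_k\|_\infty\le C$, which is itself available only because $u_k<0$ by Lemma \ref{3}, together with the graph Poincaré inequality of Lemma \ref{2.2}, controls only the \emph{oscillation} of $u_k$; the extra input that prevents $u_k$ from drifting off to $-\infty$ is the conserved identity $\int_V e^{u_k}(1-e^{u_k})^5\,d\mu=\frac{4\pi N}{\lambda_k}$, whose right-hand side stays bounded away from $0$.
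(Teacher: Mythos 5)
Your proposal is correct, and the first two thirds (defining $\hat\lambda$ as the infimum of the solvability set, the lower bound from the necessary condition, and the subsolution/monotone-iteration argument showing the set is an interval) follow essentially the same route as the paper. The interesting divergence is at the threshold $\lambda=\hat\lambda$. The paper exploits the monotonicity $u_{\lambda_2}\le u_{\lambda_1}$ for $\hat\lambda<\lambda_2<\lambda_1$ to define $U(x):=\lim_{\lambda\to\hat\lambda^+}u_\lambda(x)$, and then rules out $U\equiv-\infty$ by integrating the equation and rules out the mixed case (blow-down on a nonempty proper subset $V_1$) by evaluating the Laplacian at a vertex of $V_2$ adjacent to $V_1$, where one term tends to $-\infty$ while the equation forces a uniform lower bound. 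You instead prove a genuine a priori bound along any sequence $\lambda_k\downarrow\hat\lambda$: the sign information $u_k<0$ from Lemma \ref{3} gives $\|\Delta u_k\|_\infty\le C$, Green's formula plus Lemma \ref{2.2}/Lemma \ref{24} converts this into a uniform bound on $\|\nabla u_k\|_2$ and hence on the oscillation of $u_k$, and the integral identity $\int_V e^{u_k}(1-e^{u_k})^5\,d\mu=4\pi N/\lambda_k$ pins $\max_V u_k$ from below, so $\{u_k\}$ is uniformly bounded and compactness on the finite graph finishes the argument. Your version does not need the monotone family at all (any sequence of solutions works), is quantitative, and incidentally sidesteps a small imprecision in the paper's dichotomy step (the paper's choice of $y_2\in V_2$ must be made adjacent to $V_1$, which requires invoking connectedness); the paper's version is softer and reuses the comparison machinery already built in Lemma \ref{2}. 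Both are complete proofs of the statement.
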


\begin{proof}
	Denote $A:=\{\lambda>0 | \lambda~ \text{is such that \eqref{E1} admits a solution} \} $. We claim that $A$ is a interval. If $\lambda_0 \in A$, let $v^{'}$ be the solution of \eqref{E1} with $\lambda =\lambda_0$. By Lemma \ref{3}, we have $$v{'}<0~ \text{on}~ V.$$ Set $u^{'}=v^{'}-u_0$, then 
	$$u^{'}+u_0<0~ \text{on}~V.$$ It is easy to check that $u^{'}$ is a subsolution of \eqref{6} for $\lambda \ge \lambda _0 $. It follows from Lemma \ref{2} that $\lambda \in A$ for $\lambda\ge \lambda_0$. Thus $A$ is an interval. Clearly, $$\lambda_{i}:=inf A$$ is well defined. We can choose a sequence $\{\lambda_{n} \} \subset A$ such that $\lambda_{n} \to \lambda_{i}$. On account of $ \lambda_{n}  \ge \frac{4\pi N}{\text{Vol(V)}} \frac{6^6}{5^5}$, we obtain $$\lambda_{i} \ge \frac{4\pi N}{\text{Vol}(V)} \frac{6^6}{5^5}.$$
	
	For any $\lambda> \hat{\lambda}$,  we can find a solution of \eqref{6} denoted by $u_{\lambda} (x)$. We next prove that if $\lambda_1 > \lambda_2> \hat{\lambda} $, then $u_{\lambda_1} \ge u_{\lambda_2}$ on $V$. By Lemma \ref{3}, $u_0+ u_{\lambda_2} <0$. Thus we deduce that 
	\begin{equation*}
		\begin{aligned}
			\Delta u_{ \lambda_2} &= \lambda_2 \mathrm{e}^{u_0+ u_{\lambda_2}}\left(\mathrm{e}^{u_0+ u_{\lambda_2}}-1\right)^{5}+\frac{4\pi N}{\text{Vol} (V)} \\
			&>\lambda_1 \mathrm{e}^{u_0+u_{ \lambda_2}}\left(\mathrm{e}^{u_0+ u_{\lambda_2}}-1\right)^{5}+\frac{4\pi N}{\text{Vol} (V)} 	.
		\end{aligned}
	\end{equation*}
and hence that $u_{\lambda_2}$ is a subsolution of \eqref{6} with $\lambda = \lambda_1$. By a similar argument as Lemma \ref{2}, we can show that \begin{equation}\label{101}
	u_{\lambda_2} \le u_{\lambda_1}~\text{ on } ~V.
\end{equation} 
Thus we can define $U(x):= \lim\limits_{\lambda \to \hat{\lambda}^+} u_{\lambda} (x) \in [-\infty,-u_0)$.

 We claim that
  \begin{equation}\label{3.8}
 	U(x)>-\infty ~~~\forall x\in V.
 \end{equation}  Suppose that 
			$
			\lim \limits_{\lambda \to \hat{\lambda}^+ } u_{\lambda}(x)=-\infty	
			$ for all $x\in V$.
		Integrating 	\begin{equation}\label{37} \Delta u_{\lambda}=\lambda e^{u_{0}+u_{\lambda}}\left(e^{u_{0}+u_{\lambda}}-1\right)^5+\frac{4 \pi N}{\text{Vol} (V)} 	
		\end{equation} on $V$, we obtain\begin{equation}\label{38}
				\begin{aligned}
					0=\int \limits_{V} \Delta u_{\lambda} d \mu &=\int \limits_{V} \lambda e^{u_{0}+u_{\lambda}}\left(e^{u_{0}+u_{\lambda}}-1\right)^{5} d \mu+4 \pi N \\
					&=\lambda\sum_{x\in V} \mu(x) e^{u_{0}+u_{\lambda}}\left(e^{u_{0}+u_{\lambda}}-1\right)^{5} d \mu+4 \pi N.
				\end{aligned}
			\end{equation}
			Letting $\lambda \rightarrow \hat{\lambda}^+$ in \eqref{38}, we see that $0=4\pi N$, which is a contradiction. Define 
			\begin{equation}
				V_{1}=\left\{x \in V | \lim _{\lambda \rightarrow \hat{\lambda}^+} u_{\lambda}=-\infty\right\}, V_{2}:=\left\{x \in V \mid \lim _{\lambda \rightarrow \hat{\lambda}^+} u_{\lambda} \text { exists in }(-\infty, -u_0) \right\}.
			\end{equation}
		If $V_1 = \emptyset$, then \eqref{3.8} holds. Next, we suppose that $V_1 \not = \emptyset$ and $V_2 \not= \emptyset$. Choose $y_2 \in V_2$, then 
		\begin{equation*}
			\begin{aligned}
				\Delta u_{\lambda}\left(y_{2}\right) &=\frac{1}{\mu \left(y_{2}\right)} \sum_{x \sim y_{2}} w_{x y_{2}}\left(u_{\lambda}(x)-u_{\lambda}\left(y_{2}\right)\right) \\
				&=\frac{1}{\mu \left(y_{2}\right)} \sum_{y\sim y_{2}, y_{\in} V_{1}} w_{y x_{2}}\left(u_{\lambda}(y)-u_{\lambda}\left(y_{2}\right)\right)+\frac{1}{\mu (y_{2})} \sum_{y\sim y_{2}, y_{\in} V_{2}  }w_{y x_{2}} \left(u_{\lambda}(y)-u_{\lambda}\left(y_{2}\right)\right) \\
				& =:I_1(\lambda)+I_2(\lambda).
			\end{aligned}
		\end{equation*}
	Clearly, 	$\lim \limits_{\lambda \rightarrow \hat{\lambda}^+} I_1(\lambda)=-\infty$ and $\lim \limits _{\lambda \rightarrow \hat{\lambda}^+} I_2(\lambda)$ exists in $\mathbb{R}$. By \eqref{37}, we have $$\Delta u_{\lambda} (y_2) \ge \lambda(-\frac{5^5}{6^6}) + \frac{4\pi N}{\text{Vol} (V)}.$$ This is impossible. Thus we have $V_1= \emptyset$.
	
	 Letting $\lambda \to \hat{\lambda}^+$ in 
\eqref{37},
	we can deduce that $U$ is a solution of \eqref{6} with $\lambda=\hat{\lambda}$.
\end{proof}

Define \begin{equation}\label{ene}
	I_{\lambda}(v):=\int \limits_{V} \frac{1}{2}|\nabla v|^{2}+\frac{\lambda}{6}\left(e^{u_{0}+v}-1\right)^{6}+\frac{4 \pi N}{\text{Vol} (V)} vd \mu.
\end{equation}

We may give a sufficient condition under which the problem \eqref{6} admits a solution and $I_{\lambda}(v)$ has a minimizer.
 \begin{Lemma}\label{ji}
 	If $\lambda> \hat{\lambda} $, then there exists a solution $v_\lambda$ of \eqref{6}  and it is a local minimum of the functional $I_{\lambda}(v)$ defined by \eqref{ene}.  \end{Lemma}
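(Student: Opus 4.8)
The plan is to exploit the two facts already established: for $\lambda > \hat\lambda$ there exist two comparable solutions $u_{\lambda_2} \le u_\lambda$ coming from the monotone iteration (take $\hat\lambda < \lambda_2 < \lambda$ and set $\underline v := u_{\lambda_2}$, $\overline v := -u_0$ as sub/supersolution pair for \eqref{6} at parameter $\lambda$), and the variational structure of $I_\lambda$, whose Euler--Lagrange equation is exactly \eqref{6}. The idea is the classical trick of showing that a minimizer of $I_\lambda$ over the order interval $[\underline v,\overline v]$ is in fact an interior minimizer, hence a free local minimum of $I_\lambda$ on $W^{1,2}(V)$.

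First I would record that $I_\lambda$ is well defined, $C^1$, and coercive on $W^{1,2}(V)$: the term $\tfrac{\lambda}{6}(e^{u_0+v}-1)^6 \ge 0$ is harmless, and writing $v = \bar v + (v - \bar v)$ with $\int_V(v-\bar v)\,d\mu$ controlled, coercivity of $\tfrac12\|\nabla v\|_2^2 + \tfrac{4\pi N}{\mathrm{Vol}(V)}\int_V v\,d\mu$ follows from the Poincaré-type inequality of Lemma~\ref{24} once one notices that the exponential term forces $\int_V v\,d\mu$ not to drift to $-\infty$ on a minimizing sequence (if it did, $\int_V v\,d\mu \to -\infty$ would be needed to keep $I_\lambda$ bounded, but then $\tfrac{4\pi N}{\mathrm{Vol}(V)}\int_V v \to -\infty$ together with the nonnegative quadratic and exponential terms is consistent, so one really must use the exponential term: on a set where $u_0+v$ is very negative, $(e^{u_0+v}-1)^6 \to 1$, giving a positive lower bound that, integrated, offsets things — I would make this precise). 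By Lemma~\ref{21} (precompactness of $W^{1,2}(V)$) a minimizing sequence subconverges to a minimizer $v_\lambda \in W^{1,2}(V)$ of $I_\lambda$ over $W^{1,2}(V)$ — indeed since $V$ is finite, $W^{1,2}(V)$ is finite-dimensional and this is just "continuous coercive function attains its min."

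Next I would show $v_\lambda$ can be taken to lie strictly between $\underline v$ and $\overline v$. To get $v_\lambda \le \overline v = -u_0$: if the minimizer violates this, replace $v_\lambda$ by $\min(v_\lambda, -u_0)$ and check, using $\Delta(-u_0) = \tfrac{4\pi N}{\mathrm{Vol}(V)}$ at vertices $x \ne p_s$ (and the Dirac contribution at $p_s$ having the favorable sign, as in the proof of Lemma~\ref{2}), that this does not increase $I_\lambda$ — on a finite graph one argues vertex by vertex that truncating against a supersolution lowers the Dirichlet energy plus the convex bulk term. Similarly $v_\lambda \ge \underline v = u_{\lambda_2}$ by truncating from below against the subsolution. (Alternatively, and more cleanly on a finite graph, run the monotone scheme \eqref{9} starting from $\overline v$ and from $\underline v$; the limit is a solution in the order interval, and a standard argument shows the minimal/maximal such solution is a local minimizer of $I_\lambda$ in the $C^0 = $ uniform topology, which on a finite graph is the $W^{1,2}$ topology.) Once $\underline v \le v_\lambda \le \overline v$ pointwise with, by the strong maximum principle of Lemma~\ref{22} applied to the difference equations, strict inequality $\underline v < v_\lambda$ and $v_\lambda < \overline v$ at every vertex (the difference satisfies $(\Delta - K)(\cdot) \ge 0$ with a strictly positive term somewhere, so it is strictly negative everywhere by the finite-graph maximum principle), we conclude: since $V$ is finite, "$v_\lambda$ in the interior of the order interval" means there is a genuine $W^{1,2}$-ball around $v_\lambda$ contained in $[\underline v, \overline v]$, and $v_\lambda$ minimizes $I_\lambda$ there; hence $v_\lambda$ is a local minimum of $I_\lambda$ on $W^{1,2}(V)$. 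Being a critical point, $v_\lambda$ solves \eqref{6}.

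The main obstacle is the truncation/interior-point step: one must verify carefully that truncating the minimizer against the sub- and supersolutions genuinely does not increase $I_\lambda$ (the Dirichlet part is the delicate piece, since $|\nabla \min(v,-u_0)|$ must be compared with $|\nabla v|$ edge by edge, and the Dirac masses in $\Delta u_0$ must be handled) and that the resulting function still lies strictly inside the order interval so that the local-minimum conclusion in the Hilbert space $W^{1,2}(V)$ is legitimate. Everything else — existence of the minimizer, $C^1$ regularity, the Euler--Lagrange computation $I_\lambda'(v)[\varphi] = \int_V \Gamma(v,\varphi) + \lambda e^{u_0+v}(e^{u_0+v}-1)^5 \varphi + \tfrac{4\pi N}{\mathrm{Vol}(V)}\varphi\,d\mu$, so that $I_\lambda'(v) = 0 \iff \eqref{6}$ — is routine in the finite-dimensional setting.
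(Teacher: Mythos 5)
There is a genuine gap at the very first step: your claim that $I_\lambda$ is coercive (or even bounded from below) on all of $W^{1,2}(V)$ is false, and the hand-waving about the exponential term ``offsetting things'' cannot be made precise. Take $v\equiv -c$ with $c\to+\infty$: then $\|\nabla v\|_2=0$, the term $\tfrac{\lambda}{6}\int_V(e^{u_0-c}-1)^6\,d\mu$ stays \emph{bounded} (it converges to $\tfrac{\lambda}{6}\mathrm{Vol}(V)$, since $(e^{u_0+v}-1)^6\to 1$ pointwise --- a bounded quantity, not a growing one), while $\tfrac{4\pi N}{\mathrm{Vol}(V)}\int_V v\,d\mu=-4\pi Nc\to-\infty$. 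Hence $I_\lambda(-c)\to-\infty$ and no global minimizer exists. This is not a technicality: the paper exploits exactly this unboundedness in \eqref{ss} to build the mountain-pass geometry for the \emph{second} solution, so any proof of Lemma \ref{ji} that starts from a global minimizer is doomed. Your proposal is also internally inconsistent on this point --- the opening paragraph announces minimization over the order interval $[\underline v,\overline v]$, but the execution paragraph minimizes over all of $W^{1,2}(V)$ and then tries to push the minimizer into the order interval by truncation; if the global minimizer existed you would not need the truncation at all.

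The repair is precisely your own stated ``plan,'' which is essentially what the paper does: minimize $I_\lambda$ only over a set with a one-sided lower constraint. The paper takes $A=\{v\in W^{1,2}(V):v\ge U\}$ where $U$ is the solution at $\hat\lambda$ (a subsolution for $\lambda>\hat\lambda$ since $u_0+U<0$); the constraint forces the means $c_n=\tfrac{1}{\mathrm{Vol}(V)}\int_V v_n\,d\mu$ of a minimizing sequence to be bounded below by $\tfrac{1}{\mathrm{Vol}(V)}\int_V U\,d\mu$, which together with the upper bound on $c_n$ coming from $I_\lambda(v_n)\ge 4\pi N c_n$ and the Poincar\'e inequality yields a bounded minimizing sequence; the constrained minimizer is then shown to solve \eqref{6} (via the variational-inequality-plus-subsolution argument, cited from Tarantello's appendix), to satisfy $v_\lambda>U$ strictly by the maximum principle, and hence to be an interior point of $A$ in the $W^{1,2}$ topology (uniform topology, as $V$ is finite), so it is a genuine local minimum. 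Your truncation-against-sub/supersolution route for placing the minimizer in the order interval would also work on a finite graph, but only after you abandon the global minimization and restrict to the constrained set from the outset; as written, the existence step fails.
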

 \begin{proof}
 	Thanks to $u_0 + U(x) < 0$, we conclude that $U(x)$ is a subsolution of \eqref{6} for $\lambda>\hat{\lambda}$. We define $$
 	A=\left\{v \in W^{1,2} \mid v \geqslant U \text { in } V\right\} \text {. }
 	$$
 	Clearly, $I_\lambda$ is bounded from below on $V$. Thus we can define $$\eta_0 := \inf \limits_{v \in A} I_{\lambda} (v).$$
 	Set $\{v_n \}$ be a minimizing sequence and $v_n= v_n^{'}+c_n$, $n=1,2,\dots$, where 
 	$$c_n=\frac{\int \limits_{V} v_n d\mu} {\text{Vol(V)}}. $$ It is easy to see that
 	 $$c_n\ge \frac{\int \limits_{V} U d\mu} {\text{Vol(V)}}.$$ Thus, we get 
 	\begin{equation}
 		I_{\lambda}\left(v_{n}\right) \geqslant \int \limits_{V} \frac{1}{2}\left|\nabla v_{n}\right|^{2}d \mu+\frac{4 \pi N}{\text{Vol(V)}} \int \limits_{V} U d \mu,
 	\end{equation}
 which implies that $\{ \|\nabla v_n\|_2 \}_{n=1}^{\infty} $ is bounded. By \eqref{ene}, we have $$I_{\lambda}(v_n) \ge \int \limits_{V} \frac{4\pi N}{\text{Vol(V)}} c_n d\mu,$$ which implies that $$c_n \le \frac{I_{\lambda}(v_n)}{4\pi N}.$$ Thus, $\{v_n \}$ is bounded in $W^{1,2} (V)$. Since $V$ is a finite graph, by passing to a subsequence, there exists $v_{\lambda}(x)$ such that 
 
$$v_n(x) \to v_{\lambda}(x)$$ as $n\to+\infty$ for every $x\in V$. Thus $$I_{\lambda}(v_\lambda)=\eta_0.$$ By a similar argument as the appendix of \cite{T}, we can deduce that $v_\lambda $ is a solution of \eqref{6}.
 
 We next show that $v_{\lambda}> U$ in $V$. It is easy to check that 
\begin{equation}\label{m1}
	\Delta (U-v_\lambda)> \hat{\lambda} (U-v_\lambda).
\end{equation} By Lemma \ref{22}, we have $W:=U-v_\lambda \le 0$ on $V$. We claim that 
$$W(x_0):=\max \limits_{V} W <0.$$ Otherwise, $W(x_0)=0$. Clearly, $\Delta W(x_0)\le 0$. By \eqref{m1}, we obtain $W(x_0)<0$, which is a contradiction. Thus we have $$U<v_\lambda~ \text{on}~ V.$$

 We claim that $v_ \lambda$ is a local minimum of $I_{\lambda}(v)$ in $A$. For any integer $n\ge 1$, we see that \begin{equation}
 	\inf \left\{I_{\lambda}(w) \mid w \in W^{1,2}(V),\|w-v_\lambda \|_{W^{1,2}(V)} \leq \frac{1}{n}\right\}=\varepsilon_{n}<I_{\lambda}(v_\lambda).
 \end{equation}
 By a similar argument as above, we can deduce that there exists $\{v_n\} \subset W^{1,2}(V)$ satisfying $$\| v_n-v_\lambda \|_{W^{1,2}(V)} \le \frac{1}{n}$$ and $$I_{\lambda} (v_n) = \epsilon_n.$$ Thus we conclude that, by passing to a subsequence, $v_n \to v_\lambda$ in $V$ as $n\to +\infty$, and hence that $v_n>U$ for sufficiently large $n$. Therefore, we obtain $I_{\lambda}(v_n) \ge I_{\lambda} (v_\lambda)$. This is a contradiction.
 \end{proof}
 
 We now prove that $I_\lambda (v)$ satisfies the Palais-Smale condition.
 \begin{Lemma}\label{p}
 	Every sequence $\{v_n\} \subset W^{1,2} (V)$ satisfying
 	\begin{equation}\label{3.6}
 		I_{\lambda}(v_n)\to \alpha~ \text{and}~ \| I^{'}_{\lambda}(v_n) \|\to 0 ~\text{as} ~n \to +\infty
 	\end{equation} 
 has a convergent subsequence.
 \end{Lemma}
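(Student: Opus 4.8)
The plan is to exploit the finite dimensionality of $W^{1,2}(V)$: once we show that any sequence $\{v_n\}$ as in \eqref{3.6} is bounded in $W^{1,2}(V)$, Lemma \ref{21} immediately extracts a subsequence converging in $W^{1,2}(V)$ (and, since $I_\lambda$ is smooth on the finite-dimensional space $W^{1,2}(V)$, its limit is automatically a critical point). So the whole proof reduces to one boundedness estimate.

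First I would extract the two pieces of information carried by \eqref{3.6}. Writing $F(y)=\mathrm e^{y}(\mathrm e^{y}-1)^{5}$ as in \eqref{9}, the first variation of $I_\lambda$ is, for $\phi\in W^{1,2}(V)$,
\begin{equation*}
	I_\lambda'(v_n)[\phi]=\int\limits_V\Big(\Gamma(v_n,\phi)+\lambda F(u_0+v_n)\phi+\tfrac{4\pi N}{\operatorname{Vol}(V)}\phi\Big)\,d\mu .
\end{equation*}
Testing with the constant function $\phi\equiv 1$, whose $W^{1,2}(V)$-norm is the fixed number $\sqrt{\operatorname{Vol}(V)}$, the $\Gamma$-term drops out and $\|I_\lambda'(v_n)\|\to 0$ yields
\begin{equation*}
	\lambda\int\limits_V F(u_0+v_n)\,d\mu+4\pi N\longrightarrow 0 .
\end{equation*}
Since $F\ge-\tfrac{5^{5}}{6^{6}}$ on a finite graph and $F(y)\to+\infty$ as $y\to+\infty$, boundedness of $\int_V F(u_0+v_n)\,d\mu$ forces a uniform upper bound $v_n\le M$ on $V$; in particular the averages $c_n:=\frac{1}{\operatorname{Vol}(V)}\int_V v_n\,d\mu$ are bounded above. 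This upper bound on $v_n$ also keeps $\frac{\lambda}{6}(\mathrm e^{u_0+v_n}-1)^{6}$ bounded, so from $I_\lambda(v_n)\to\alpha$ and \eqref{ene} we obtain that $\tfrac12\|\nabla v_n\|_2^{2}+4\pi N c_n$ is bounded.

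The heart of the matter is to rule out $c_n\to-\infty$. Suppose it happens. Decompose $v_n=v_n'+c_n$ with $\int_V v_n'\,d\mu=0$; Lemma \ref{2.2} together with $\|v_n'\|_\infty\le(\min_V\mu)^{-1/2}\|v_n'\|_2$ gives $\|v_n'\|_\infty\le C\|\nabla v_n\|_2$. From the boundedness of $\tfrac12\|\nabla v_n\|_2^{2}+4\pi N c_n$ we get $\|\nabla v_n\|_2^{2}\lesssim|c_n|$, hence $\|v_n'\|_\infty\lesssim|c_n|^{1/2}$, so $v_n(x)=v_n'(x)+c_n\le C|c_n|^{1/2}+c_n\to-\infty$ uniformly on $V$. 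But then $F(u_0(x)+v_n(x))\to 0$ for every $x$ (as $F(y)\to0$ when $y\to-\infty$), so $\int_V F(u_0+v_n)\,d\mu\to 0$, contradicting $\lambda\int_V F(u_0+v_n)\,d\mu+4\pi N\to 0$ since $N\ge 1$. Hence $c_n$ is bounded; then $\|\nabla v_n\|_2$ is bounded, Lemma \ref{2.2} bounds $\|v_n'\|_2$, and therefore $\{v_n\}$ is bounded in $W^{1,2}(V)$. Lemma \ref{21} then finishes the proof.

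I expect the only real obstacle to be this loss of compactness at $-\infty$: because the nonlinearity $F$ flattens out ($F(y)\to 0$) as its argument tends to $-\infty$, the energy functional alone does not control $c_n$ from below, and one genuinely needs the constant-test-function identity above to see that a sequence whose average drifts to $-\infty$ cannot be almost-critical. Everything else — computing $I_\lambda'$, the Poincaré-type inequalities of Section 2, and the final extraction of a convergent subsequence — is routine on a finite graph.
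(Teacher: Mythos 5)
Your proposal is correct, and it follows the paper's overall strategy (test $I_\lambda'(v_n)$ with the constant function to control $\int_V e^{u_0+v_n}(e^{u_0+v_n}-1)^5\,d\mu$, decompose $v_n=v_n'+c_n$, bound $c_n$ from above via the energy, then establish a lower bound on $c_n$ and conclude boundedness in $W^{1,2}(V)$ plus precompactness). Where you genuinely diverge is the lower bound on $c_n$, which is the crux. The paper tests $I_\lambda'(v_n)$ a second time with $\varphi=v_n'$, uses the sign of $\int_V e^{6(u_0+c_n)}(e^{6v_n'}-1)v_n'\,d\mu\ge 0$ together with a chain of H\"older estimates on $\int_V e^{k(u_0+v_n)}\,d\mu$ for $k=2,\dots,6$ to bound $\|\nabla v_n'\|_2$ directly, and only then reads off the lower bound on $c_n$ from the energy identity. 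You instead exploit finiteness of the graph more aggressively: the integral bound on $F(u_0+v_n)$ gives a pointwise upper bound $v_n\le M$ (since $F$ is bounded below and blows up at $+\infty$), and then a contradiction argument — if $c_n\to-\infty$ the energy bound forces $\|\nabla v_n\|_2^2\lesssim|c_n|$, hence $\|v_n'\|_\infty\lesssim|c_n|^{1/2}$, hence $v_n\to-\infty$ uniformly, hence $\int_V F(u_0+v_n)\,d\mu\to 0$, contradicting $\lambda\int_V F(u_0+v_n)\,d\mu\to-4\pi N\neq 0$. Your route is shorter and avoids all of the paper's $L^p$ bookkeeping, at the price of relying on the discrete $L^2\hookrightarrow L^\infty$ embedding and the behavior of $F$ at $-\infty$; the paper's route is closer to the continuum (Tarantello-style) argument and would survive in settings where pointwise bounds are not free. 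Both are valid proofs of the lemma.
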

 \begin{proof}
 	From \eqref{3.6}, we have \begin{equation}\label{329}
 		\frac{1}{2}\left\|\nabla v_{n}\right\|_{2}^{2}+\frac{\lambda}{6} \int \limits_{V}\left(e^{u_{0}+v_{n}}-1\right)^{6} \mathrm{~d} x+\frac{4 \pi N}{|V|} \int \limits_{V} v_{n} \mathrm{~d} x = \alpha +o(1), \text{as}~ n\to +\infty 
 	\end{equation} 
 \begin{equation}\label{330}
 			\left|\int \limits_{V} \Gamma(v_{n},\varphi)    \mathrm{d} x+\lambda \int \limits_{V} e^{u_{0}+v_{n}}\left(e^{u_{0}+v_{n}}-1\right)^{5} \varphi \mathrm{d} x+\frac{4 \pi N}{|V|} \int \limits_{V} \varphi \mathrm{d} x\right| \leq \varepsilon_{n}\|\varphi\|_{W^{1,2}(V)},~\epsilon_n\to 0
 	\end{equation} 
 as $n\to +\infty,$ $\varphi \in H^1(V)$. By taking $\varphi=1$ in \eqref{330}, we have
 \begin{equation*}
 	\lambda \int \limits_{V} e^{u_{0}+v_{n}}\left(e^{u_{0}+v_{n}}-1\right)^{5} \mathrm{~d} x+4 \pi N \leq \varepsilon_{n}|V|^{1/2},
 \end{equation*}
from which we deduce that
\begin{equation*}
	\begin{aligned}
		\frac{\varepsilon_{n}|V|^{1/2}}{\lambda} & \geq \frac{4 \pi N}{\lambda}+\int \limits_{V} e^{u_{0}+v_{n}}\left(e^{u_{0}+v_{n}}-1\right)^{5} \mathrm{~d} \mu \\
		&=\frac{4 \pi N}{\lambda}+\int \limits_{V}\left(e^{u_{0}+v_{n}}-1\right)^{6} \mathrm{~d} \mu+\int \limits_{V}\left(e^{u_{0}+v_{n}}-1\right)^{5} \mathrm{~d} \mu \\
		& \geq \frac{4 \pi N}{\lambda}-\frac{1}{6}|V|+\frac{1}{6} \int \limits_{V}\left(e^{u_{0}+v_{n}}-1\right)^{6} \mathrm{~d} \mu .
	\end{aligned}
\end{equation*}
This implies that there exists a constant $C=C(\epsilon_n,\lambda,|V|)>0$ such that
\begin{equation}\label{318}
	\int \limits_{V}\left(e^{u_{0}+v_{n}}-1\right)^{6} \mathrm{~d} \mu \leq C.
\end{equation}
	Hence, we can find $C_2 >0$ such that \begin{equation}\label{332}
		\int \limits_{V} e^{6\left(u_{0}+v_{n}\right)} \mathrm{d} \mu=\int \limits_{V}\left[\left(e^{u_{0}+v_{n}}-1\right)+1\right]^{6} \mathrm{~d} \mu \leq 2^{6}\left[\int \limits_{V}\left(e^{u_{0}+v_{n}}-1\right)^{6} \mathrm{~d} \mu+|V|\right] \leq C_2.
	\end{equation}
Then by H$\ddot{\text{o}}$lder inequality, there exists $C_3 >0$ such that 
\begin{equation}
	\int \limits_{V} e^{2\left(u_{0}+v_{n}\right)} \mathrm{d} \mu \leq\left(\int \limits_{V} e^{6\left(u_{0}+v_{n}\right)} \mathrm{d} \mu \right)^{\frac{1}{3}}|V|^{\frac{2}{3}} \leq C_3.
\end{equation}
Similarly, $\int \limits_{V} e^{4\left(u_{0}+v_{n}\right)} \mathrm{d} \mu \leq C_4$ for a suitable constant $C_4>0$.
Decompose $v_n=v_n '+c_n$, where $\int \limits_{V} v_n ' d\mu=0$ and $c_n \in \mathbb{R}$ for $n=1,2,\dots$. Substituing it in \eqref{329}, we conclude that 
\begin{equation}
	\frac{1}{2}\left\|\nabla v_{n}^{\prime}\right\|_{2}^{2}+\frac{\lambda}{6} \int \limits_{V}\left(e^{u_{0}+v_{n}^{\prime}+c_{n}}-1\right)^{6} \mathrm{~d} \mu+4 \pi N c_{n} \rightarrow \alpha,
\end{equation}
as $n\to +\infty$, and hence that $c_n$ is bounded from above. By \eqref{329}, we see that there exists an integer $N$ such that $$\alpha-1 < I_{\lambda} (v_n)< \alpha +1$$ for $n\ge N$. This implies that \begin{equation}\label{336}
	\alpha -1<\frac{1}{2}\left\|\nabla v_{n}^{\prime}\right\|_{2}^{2}+\frac{\lambda}{6} \int \limits_{V}\left(e^{u_{0}+v_{n}^{\prime}+c_{n}}-1\right)^{6} \mathrm{~d} \mu+4 \pi N c_{n}<\alpha+1.
\end{equation}
From \eqref{318} and \eqref{336}, we conclude that \begin{equation}\label{337}
	\alpha-1+\frac{4 \lambda \pi N}{5}-\left(\frac{\lambda}{6}+\frac{\varepsilon_{n}}{5}\right)|V|<\frac{1}{2}\left\|\nabla v_{n}^{\prime}\right\|_{2}^{2}+4 \pi N c_{n}<\alpha+1.
\end{equation}

Next we show that $c_n$ is bounded from below. Taking $v_{n}^{\prime}$ in \eqref{330}, by Lemma \ref{2.2}, we can find a constant $C_5$ such that \begin{equation}
	\left\|\nabla v_{n}^{\prime}\right\|_{2}^{2}+\lambda \int \limits_{V} e^{u_{0}+v_{n}}\left(e^{u_{0}+v_{n}}-1\right)^{5} v_{n}^{\prime} \mathrm{d} \mu \leq \varepsilon_{n}\left\|v_{n}^{\prime}\right\|_{W^{1,2}(V)} \leq C_5 \varepsilon_{n}\left\|\nabla v_{n}^{\prime}\right\|_{2}.
\end{equation}
This implies that \begin{equation}\label{339}
	\begin{aligned}
		\| \nabla & v_{n}^{\prime} \|_{2}^{2}+\lambda \int \limits_{V} e^{6\left(u_{0}+c_{n}\right)}\left(e^{6 v_{n}^{\prime}}-1\right) v_{n}^{\prime} \mathrm{d} \mu \\
		\leq & \lambda \int \limits_{V} e^{6\left(u_{0}+c_{n}\right)} v_{n}^{\prime} \mathrm{d} \mu+C_5 \varepsilon_{n}\left\|\nabla v_{n}^{\prime}\right\|_{2} \\
		&+C_6 \int \limits_{V} e^{u_{0}+v_{n}}\left(e^{4\left(u_{0}+v_{n}\right)}+e^{3\left(u_{0}+v_{n}\right)}+e^{2\left(u_{0}+v_{n}\right)}+e^{u_{0}+v_{n}}+1\right)\left|v_{n}^{\prime}\right| \mathrm{d} \mu .
	\end{aligned}
\end{equation}
By Lemma
\ref{2.2}, Lemma
\ref{24} and H$\ddot{\text{o}}$lder inequality, we deduce that 
\begin{equation}
	\int \limits_{V} e^{6\left(u_{0}+c_{n}\right)} v_{n}^{\prime} \mathrm{d} \mu \leq C_7 \left\|v_{n}^{\prime}\right\|_{2} \leq C_8 \left\|\nabla v_{n}^{\prime}\right\|_{2},
\end{equation}
and
\begin{equation}
	\int \limits_{V} e^{5\left(u_{0}+v_{n}\right)}\left|v_{n}^{\prime}\right| \mathrm{d} \mu \leq\left(\int \limits_{V} e^{6\left(u_{0}+v_{n}\right)} \mathrm{d} \mu \right)^{\frac{5}{6}}\left(\int \limits_{V}\left|v_{n}^{\prime}\right|^{6} \mathrm{~d} \mu \right)^{\frac{1}{6}} \leq C_9\left\|v_{n}^{\prime}\right\|_{6} \leq C_{10} \left\|\nabla v_{n}^{\prime}\right\|_{2} 
\end{equation}
for suitable positive constants $C_7-C_{10}$.
Similarly, we can get all the other terms on the right hand side of \eqref{339} can be bounded by $\hat{C}||\nabla v_{n}^{\prime}||_2$, where $\hat{C}>0$ is a constant.
Thus, there exists constant $C_{11}>0$ such that \begin{equation}\label{340}
	\left\|\nabla v_{n}^{\prime}\right\|_{2}^{2}+\lambda \int \limits_{V} e^{6\left(u_{0}+c_{n}\right)}\left(e^{6 v_{n}^{\prime}}-1\right) v_{n}^{\prime} \mathrm{d} \mu \leq C_{11}\left\|\nabla v_{n}^{\prime}\right\|_{2}.
\end{equation}
Clearly, 
\begin{equation}
	\int \limits_{V} e^{6\left(u_{0}+c_{n}\right)}\left(e^{6 v_{n}^{\prime}}-1\right) v_{n}^{\prime} \mathrm{d} \mu \geq 0.
\end{equation}
Hence by \eqref{340}, we have $||\nabla v_{n}^{\prime}||_2 \le C_{12}$ for a suitable constant $C_{12}>0$. Therefore, by \eqref{337}, we deduce that $c_n$ is bounded from below. 

Thus $\{v_n\}$ is bounded in $H^{1}(V)$. Thus, there exists $v\in H^{1}(V)$ such that, by passing to a subsequence, $v_n(x) \to v(x)$ for all $x\in V$. 
 
Next, we find the second solution of \eqref{6}. From now on, we suppose that $v_\lambda$ is the local minimum as defined by Lemma 
\ref{ji}(if not, we could have already found our second solution). Thus there exists $\rho_0 >0 $ such that
 $$I_{\lambda} (v_{\lambda})\le I_{\lambda} (v)$$ for all $v:||v-v_\lambda||_{H^{1}(V)} \le \rho_0.$ For $c>0$, we have \begin{equation}\label{ss}
	\begin{aligned}
		I_{\lambda}\left(v_{\lambda}-c\right)-I_{\lambda}\left(v_{\lambda}\right)=& \frac{\lambda}{6} \int \limits_{V}\left[\left(e^{u_{0}+v_{\lambda}-c}-1\right)^{6}-\left(e^{u_{0}+v_{\lambda}-1}\right)^{6}\right] d\mu-4 \pi N c \\
		&< \frac{\lambda}{6}|V| C_{13} -4 \pi N c \rightarrow-\infty \text { as } c \rightarrow+\infty .
	\end{aligned}
\end{equation}
 There are two possibilities: (I) $v_\lambda$ is not a strict local minimum for $I_\lambda$, (II) $v_\lambda$ is a strict local minimum for $I_\lambda.$ If case (I) happens, then we deduce that
 $$
 \inf \limits_{\left\|v-v_{\lambda}\right\| _{H^{1}(V)} = \rho} I_{\lambda}=I_{\lambda}\left(v_{\lambda}\right)=:\alpha_{\lambda}$$ for all $0<\rho< \rho_0.$ It follows that there exists a local minimum $v_\rho \in H^{1}(V)$ such that $$||v_\rho- v_\lambda||= \rho,$$ and $I_\lambda(v_\rho)=\alpha_\lambda$ for all $\rho \in (0,\rho_0)$. Therefore, in this situation, we get a one-parameter family of solutions of \eqref{6}. If case (II) happens, we can find $\rho_1 \in(0, \rho_0)$ such that 
\begin{equation}\label{s1}
	\inf _{\left\|v-v_{\lambda}\right\|  _{H^{1}(V)}=\rho_{1}} I_{\lambda} (v)>I_{\lambda}\left(v_{\lambda}\right)=\alpha_{\lambda}.
\end{equation}
By \eqref{ss}, we deduce that 
$$
 I_{\lambda}\left(u_{\lambda}-c_{0}\right) \leqslant I_{\lambda}\left(u_{\lambda}\right)-1<I_{\lambda}\left(v_{\lambda}\right)
 $$
 for some $c_0>|V|^{-\frac{1}{2}} \rho_{1}$.
 
 Define
  $\mathcal{P}=\left\{\gamma:[0,1] \rightarrow H^{1}(V)\right.$ $| \gamma$ is continuous and satisfies $\left.\gamma(0)=v_{\lambda}, \gamma(1)=v_{\lambda}-c_{0}\right\}$ and
  $$\alpha=\inf \limits _{\gamma \in \mathcal{P}} \sup \limits _{t\in [0,1]} I_{\lambda}(\gamma(t)) .$$
 From \eqref{s1},
 we conclude that $$\alpha>I_{\lambda}\left(v_{\lambda}\right) \geqslant \max \left\{I_{\lambda}(\gamma(0)), I_{\lambda}(\gamma(1))\right\}~ \forall \gamma \in \mathscr{P}.$$ Thus, by Lemma \ref{p}
 , $I_\lambda$ satisfies the hypothesis of the mountain-pass theorem. Thus $\alpha$ is a critical point of $I_\lambda$. By virtue of $\alpha > I_{\lambda} (v_\lambda)$, we get a second solution of \eqref{6}. 
 
 We now complete the proof of Theorem \ref{t1}.
  \end{proof}
 
%\section*{Acknowledgements}
\begin{proof}[Acknowledgements.]	
		The authors would like to thank the anonymous Referees for their valuable comments
	which helped to improve the manuscript.
\end{proof}

\end{document}